\newcommand{\rfw}[1]{Eq.\eqref{eq:#1}}
\newcommand{\reffig}[1]{Fig. \ref{fig:#1}}
\newcommand{\mm}[0]{\nonumber \\}
\newcommand{\tbf}[1]{\textbf{#1}}
\newcommand{\sgn}[0]{\text{sgn}\,}
\DeclareMathOperator{\diag}{diag}
\DeclareMathOperator{\perm}{perm}
\newtheorem{theorem}{Theorem}
\newtheorem{corollary}{Corollary}[theorem]
\begin{document}


\title{Permanents through probability distributions}


\author{Mobolaji Williams}
\email[]{mwilliams@jellyfish.co}
\affiliation{Jellyfish, Boston, MA 02111 }

\date{June 22, 2021}
\begin{abstract}

We show that the permanent of a matrix can be written as the expectation value of a function of random variables each with zero mean and unit variance. This result is used to show that Glynn's theorem and a simplified MacMahon theorem extend from a common probabilistic interpretation of the permanent. Combining the methods in these two proofs, we prove a new result that relates the permanent of a matrix to the expectation value of a product of hyperbolic trigonometric functions, or, equivalently, the partition function of a spin system. We conclude by discussing how the main theorem can be generalized and how the techniques used to prove it can be applied to more general problems in combinatorics.
\end{abstract}

\keywords{Permanent, Probability, Glynn's Theorem, MacMahon Master Theorem}

\maketitle

\section{Introduction \label{sec:intro}}

The permanent is a seemingly odd function in linear algebra. For a matrix $A$ with row $i$ and column $j$ element $a_{i, j}$, it is defined as 
\begin{equation}
\perm(A) = \sum_{\sigma \in S_{n}} \prod_{i=1}^n a_{i, \sigma(i)},
\label{eq:perm_def}
\end{equation} 
where $\sigma$ is an element of the symmetric group $S_n$. The permanent looks similar to the determinant, but it is missing the ``sign of the permutation" factor $\sgn(\sigma)$ which enters into a determinant and provides the latter with much of its computational simplicity. For example, while algorithms exist to compute the determinant in polynomial time, calculations (admittedly naive ones) of the permanent require $O(n!)$ time. Consequently, there is a literature around permanents that seeks alternative ways of expressing the permanent for the goal of finding either the new ways to compute it efficiently or new ways to understand it. 

Glynn's theorem \cite{Glynn_2010} falls in the ``computational efficiency" category and provides a way to compute the permanent in $O(2^{n-1}n^2)$ time. The MacMahon Master theorem \cite{Percus_1971} falls in the ``new ways of understanding" category and relates permanents to their linear algebra cousin determinants. And though both Glynn's theorem and the MacMahon theorem are related to permanents, they are derived from quite different starting points. 

In this work, we use a probabilistic definition of the permanent to show that the two results extend from the same foundation, and we then use this definition to derive a new result combining aspects of both theorems. 

We start with the main theorem we seek to prove: 

\begin{theorem} Let $p_X: \Omega_X \to \mathbb{R}$ be a probability distribution defined over the domain $\Omega_X$ with zero mean and unit variance. Let $A$ be an $n\times n$ matrix with elements $a_{i, j}$. Then the permanent of $A$ is 
\begin{equation}
\perm(A) = \int_{\Omega^n_{X}} d^n\tbf{x}\, \prod_{i=1}^n p_{X}(x_i) \, x_{i} \sum_{j=1}^n a_{i, j} x_j,
\label{eq:fund_thm}
\end{equation}
where $\Omega^n_X = \Omega_X  \otimes \cdots \otimes \Omega_X$ is the $n$-factor product over the single-variable domain of integration. In condensed notation, we can write \rfw{fund_thm} as the expectation value
\begin{equation}
\perm(A) = \left\langle \prod_{i=1}^nx_{i} \sum_{j=1}^n a_{i, j} x_j \right\rangle_{x_i \sim p_X},
\end{equation}
where the average is over $\{x_i\}$, a set of independent identically distributed random variables each of which is drawn from $p_X$. 
\label{thm:fund_thm}
\end{theorem}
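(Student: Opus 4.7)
The plan is to expand the product inside the expectation into a multi-index sum, push the expectation through linearity, and then use the moment conditions $\langle x\rangle=0$ and $\langle x^2\rangle=1$ to show that only permutation-type tuples survive.

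First I would rewrite the integrand by distributing the inner sums:
\begin{equation*}
\prod_{i=1}^n x_i \sum_{j=1}^n a_{i,j} x_j = \sum_{j_1,\ldots,j_n=1}^{n} \left(\prod_{i=1}^n a_{i,j_i}\right) \prod_{i=1}^n x_i x_{j_i}.
\end{equation*}
Taking the expectation and using linearity gives
\begin{equation*}
\left\langle \prod_{i=1}^n x_i \sum_{j=1}^n a_{i,j} x_j \right\rangle = \sum_{j_1,\ldots,j_n=1}^{n} \left(\prod_{i=1}^n a_{i,j_i}\right) \left\langle \prod_{i=1}^n x_i x_{j_i} \right\rangle.
\end{equation*}

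Next I would analyze the remaining expectation for a fixed tuple $(j_1,\ldots,j_n)$. Since the $x_i$ are i.i.d., the expectation factorizes over distinct indices: if $c_k := |\{i : j_i = k\}|$ denotes the multiplicity of index $k$ among the $j_i$'s, then the variable $x_k$ appears with total exponent $1 + c_k$ (one from the $\prod_i x_i$ factor and $c_k$ from the $\prod_i x_{j_i}$ factor), and
\begin{equation*}
\left\langle \prod_{i=1}^n x_i x_{j_i} \right\rangle = \prod_{k=1}^n \langle x^{1+c_k}\rangle.
\end{equation*}

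The decisive step, and the main content of the proof, is the combinatorial observation that the moment conditions force the sum to collapse onto permutations. Because $\langle x\rangle = 0$, any factor with $c_k = 0$ kills the entire product, so only tuples with $c_k \geq 1$ for all $k$ contribute. Combined with the constraint $\sum_k c_k = n$, this forces $c_k = 1$ for every $k$, i.e.\ $(j_1,\ldots,j_n)$ is a permutation of $\{1,\ldots,n\}$. On those surviving tuples every exponent is $1 + c_k = 2$, and $\langle x^2\rangle = 1$ makes the moment product equal to $1$. The remaining sum is therefore
\begin{equation*}
\sum_{\sigma \in S_n} \prod_{i=1}^n a_{i,\sigma(i)} = \perm(A),
\end{equation*}
which is the claim. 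The only nontrivial step is the counting argument that $\sum_k c_k = n$ together with $c_k \geq 1$ forces $c_k \equiv 1$; everything else is bookkeeping and a direct appeal to the two given moments of $p_X$.
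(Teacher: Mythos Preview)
Your proof is correct and follows essentially the same approach as the paper: both expand the product into a sum over tuples $(j_1,\ldots,j_n)$ and use the moment conditions to show that only permutation tuples contribute. The paper packages the key expectation $\langle \prod_i x_i x_{j_i}\rangle$ as a ``permutation tensor'' $\Delta_{\mathbf{m}}$ and states its integral representation without spelling out the counting argument you give, so your version is actually more explicit about why the non-permutation terms vanish.
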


\begin{proof} Say we have a single-variable function $p_X$, defined over the domain $\Omega_{X} \subset \mathbb{R}$, which has the following two properties:
\begin{equation}
\int_{\Omega_X} dx\, p_X(x) x = 0, \qquad \int_{\Omega_X} dx\, p_X(x) x^2 = 1. 
\label{eq:p_props}
\end{equation}
Interpreting $x$ as a random variable, the conditions in \rfw{p_props} are equivalent to requiring $p_X$ to be a probability distribution with zero mean and unit variance. We use this interpretation for simplicity in defining the properties of $p_X$. Two such functions are $p_X(x) = \frac{1}{2}\delta(x-1) + \frac{1}{2} \delta(x+1)$ and $p_X(x) = e^{-x^2/2}/\sqrt{2\pi}$. 

Next we define a quantity called the \textit{permutation tensor}:
\begin{equation}
\Delta_{\tbf{m}} =\begin{dcases} 1 & \text{if $\, \tbf{m}$ is a permutation of $(1, 2, \ldots, n)$} \\ 0 & \text{otherwise} \end{dcases},
\label{eq:perm_tensor1}
\end{equation}
where $\tbf{m} = (m_1, m_2, \ldots, m_n)$.
That is, $\Delta_{\tbf{m}} $ is a function of $N$ integers $\tbf{m} = (m_1, m_2, \ldots, m_n)$, and it is non-zero and equal to 1 if and only if the integers are a permutation of the numbers from $1$ to $n$. The permanent of $A$ \rfw{perm_def} can be written in terms of the permutation tensor as 
\begin{equation}
\perm(A) = \sum_{\{m_j\}} \Delta_{\tbf{m}} \prod_{j=1}^{n}a_{j,m_{j}},
\label{eq:perm_def2}
\end{equation}
where the summation is defined as 
\begin{equation}
\sum_{\{m_j\}} \equiv \prod_{i=1}^{n} \sum_{m_i=1}^n.
\end{equation}
There are many possible forms for $\Delta_{\tbf{m}}$, but we will use \rfw{p_props} to write is as the multiple integral expression 
\begin{equation}
\Delta_{\tbf{m}}  = \int_{\Omega^n_X} d^n \tbf{x}\, \prod_{i=1}^{n}p_X(x_i) x_{i} x_{m_i},
\label{eq:perm_tensor2}
\end{equation}
where $d^n\tbf{x} = dx_1dx_2\cdots dx_n$. Substituting \rfw{perm_tensor2} into \rfw{perm_def2}, we obtain 
\begin{align}
\perm(A)&  = \sum_{\{m_j\}}\int_{\Omega^n_X} d^n \tbf{x}\, \left[\prod_{i=1}^{n}p_X(x_i) x_{i} x_{m_i}\right]\left[\prod_{j=1}^{n}a_{j,m_{j}} \right]\mm
& = \left[ \prod_{i=1}^{n} \sum_{m_i=1}^n\right]\int_{\Omega^n_X} d^n \tbf{x}\, \prod_{i=1}^{n}p_X(x_i) x_{i}a_{i, m_{i}} x_{m_i} \mm
& = \int_{\Omega^n_X} d^n \tbf{x}\, \prod_{i=1}^{n}p_X(x_i) x_{i}\sum_{m_i=1}^na_{i, m_{i}} x_{m_i},
\end{align}
and the theorem is proved. 

\end{proof}

As a sanity check, we can use \rfw{fund_thm} to affirm a basic result in permanent calculations. We know that if $A = \mathds{1}$, i.e., $A$ is a matrix entirely composed of $1$s, then the permanent is $n!$. Checking this result, we have 
\begin{align}
\perm(\mathds{1}) 
& = \int_{\Omega^n_{X}} d^n\tbf{x}\, \prod_{i=1}^n p_{X}(x_i) \, x_{i} \sum_{j=1}^n  x_j\mm
 & = \int_{\Omega^n_{X}} d^n\tbf{x}\, \left[\prod_{i=1}^n p_{X}(x_i)\, x_{i} \right] \Big( \sum_{j=1}^n  x_j\Big)^{n}\mm
  & =\sum_{k_1 + \cdots + k_n = n}  n!\prod_{i=1}^{n}\frac{1}{k_i!}\int_{\Omega_X} dx_i\, p_X(x_i) x_{i}^{k_i +1}
\label{eq:fund_thm}
\end{align}
This final result is only nonzero for the integrands where $x_i$ is quadratic for each $i$. Given that $k_1 + \cdots +k_n = n$, we can only have quadratic $x_i$ for all $i$ if $k_i = 1$ for all $i$. Thus, we have 
\begin{align}
\perm(\mathds{1})  &= \sum_{k_1 + \cdots + k_n =n} n! \prod_{i=1}^{n}\frac{1}{k_i!} \delta(1, k_i) = n!,
\end{align}
as expected. 

Identifying $p_X$ as a probability distribution is primarily for convenience to allow us to reference the integrals over the linear and quadratic variable as a mean and variance, respectively. The probability-based interpretation of these conditions is valid, but as we explain in the discussion, there is a more general way  to write this condition.

But first we explore how this result relates to well-known theorems involving permanents. First, with Theorem \ref{thm:fund_thm}, it becomes straightforward to prove Glynn's theorem for computing permanents\cite{Glynn_2010}. 

\begin{corollary}[Glynn's Theorem] Let $A = (a_{i, j})$ be an $n \times n$ matrix. Then we have
\begin{equation}
\perm(A) = \frac{1}{2^{n-1}}\left[\sum_{\{s_i\}; s_1 = +1} \left(\prod_{k=1}^n s_k\right) \prod_{j=1}^n \sum_{i=1}^n  a_{i, j}s_j\right],
\label{eq:glynn_res}
\end{equation}
where the outer sum is over all $2^{n-1}$ vectors $s = (s_1, \ldots, s_n)\in \{\pm 1\}^n$ with $s_1 = 1$. 
\label{corr:glynn}
\end{corollary}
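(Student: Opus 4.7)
The plan is to specialize Theorem~\ref{thm:fund_thm} to the discrete zero-mean, unit-variance distribution already flagged in its proof, namely the Rademacher distribution $p_X(x) = \tfrac{1}{2}\delta(x-1) + \tfrac{1}{2}\delta(x+1)$, and then exploit a $\mathbb{Z}_2$ symmetry of the resulting summand to fold the sum over $\{\pm 1\}^n$ onto the hyperplane $s_1 = 1$.

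First, I would substitute this $p_X$ into \rfw{fund_thm}. Because each $x_i$-integral collapses to the average of the integrand evaluated at $\pm 1$, the $n$-fold integral becomes a sum over sign vectors $s = (s_1,\ldots,s_n) \in \{\pm 1\}^n$, yielding
\begin{equation}
\perm(A) \;=\; \frac{1}{2^n} \sum_{s \in \{\pm 1\}^n} \prod_{i=1}^n s_i \sum_{j=1}^n a_{i,j}\, s_j.
\end{equation}
This is already Glynn-like: it is a signed sum of products of linear forms in the $s_j$, evaluated over a Boolean cube, with a prefactor proportional to $2^{-n}$.

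Second, I would observe the key symmetry: under the global flip $s \to -s$, the factor $\prod_i s_i$ picks up $(-1)^n$ while each linear form $\sum_j a_{i,j} s_j$ picks up $(-1)$, so their product $\prod_i s_i \cdot \prod_i(\sum_j a_{i,j} s_j)$ picks up $(-1)^n(-1)^n = 1$. Thus the summand is invariant under $s \to -s$, so pairing each $s$ with $-s$ lets me restrict the sum to representatives with $s_1 = +1$ and replace the prefactor $2^{-n}$ by $2^{-(n-1)}$. This produces the claimed form \eqref{eq:glynn_res} after a harmless relabeling of dummy indices (and, if needed, an application of $\perm(A) = \perm(A^T)$ to convert the $\prod_i \sum_j a_{i,j} s_j$ that Theorem~\ref{thm:fund_thm} gives directly into the row/column arrangement written in the statement).

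I do not expect a genuine obstacle here: the only subtlety is cosmetic, namely lining up the index conventions of the statement with those produced by the theorem. The conceptual content — that Glynn's theorem is precisely Theorem~\ref{thm:fund_thm} specialized to the Rademacher distribution, with the factor of two savings coming from the $s \to -s$ invariance — falls out with essentially no computation.
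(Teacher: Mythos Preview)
Your proposal is correct and follows essentially the same route as the paper: specialize Theorem~\ref{thm:fund_thm} to the Rademacher distribution to obtain the $2^{-n}$ sum over $\{\pm1\}^n$, then use the invariance of the summand under the global flip $s\to -s$ to restrict to $s_1=+1$ with prefactor $2^{-(n-1)}$. The paper phrases the symmetry per factor (each $s_i\sum_j a_{i,j}s_j$ is individually invariant), whereas you phrase it via the global sign count $(-1)^n(-1)^n=1$, but this is the same argument.
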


\begin{proof}
We start with Theorem \ref{thm:fund_thm}. We use the probability density 
\begin{equation}
p_X(s) = \frac{1}{2} \delta(s+1)  + \frac{1}{2} \delta(s-1),
\label{eq:dirac}
\end{equation}
where $\delta(x)$ is the Dirac delta function (pg. 160 \cite{hassani2013mathematical}). \rfw{dirac} expresses $X$ as a Bernoulli distribution with values $X=\pm1$ having equal probabilities. It is straightforward to check that \rfw{dirac} yields zero mean and unit variance.  Inserting \rfw{dirac} into \rfw{fund_thm}, and noting that 
\begin{equation}
\int_{\Omega_X} dx\,p_X(x) f(x) = \frac{1}{2} \sum_{s \in \{ \pm 1\}} f(s),
\end{equation}
we find 
\begin{equation}
\perm(A) = \frac{1}{2^{n}}\sum_{\{s_i\}} \prod_{i=1}^n\, s_{i} \sum_{j=1}^n a_{i, j} s_j, 
\label{eq:glynn_prelim}
\end{equation}
where 
\begin{equation}
 \sum_{\{s_i\}} = \prod_{i=1}^n \sum_{s_i\in \{-1,+1\}}
\end{equation}
\rfw{glynn_prelim} is a preliminary result for the permanent expressed as an an expectation value over $n$ Bernoulli random variables, and it looks very close to the standard result \rfw{glynn_res}.

To derive the standard result, we first recognize that \rfw{glynn_prelim} is unchanged if we take $s_i$ to $-s_i$ for all $i$. We can see this most clearly by noting that the factor $ s_{i} \sum_{j=1}^n a_{i, j} s_{j}$ does not change when we take $s_i, s_j \to - s_i , -s_j$. Thus for each term in the summation, there is another term that is identical to it for which all of its $s_i$ values are switched from positive to negative or vice versa. We can separate these two sets of terms by isolating a particular $s_i$ and setting this $s_i = +1$ for one set and $s_i = -1$ for the other. Without loss of generality we take this isolated $s_i$ to be at $i=1$. We can then write \rfw{glynn_prelim} as
\begin{align}
\perm(A) & = \frac{1}{2^n}\sum_{\{s_i\}} \prod_{i=1}^n s_{i} \sum_{j=1}^n a_{i, j} s_{j}\mm
& =   \frac{1}{2^{n}}\sum_{\{s_i\}; s_1 = +1}\prod_{i=1}^n s_{i} \sum_{j=1}^n a_{i, j} s_{j} +  \frac{1}{2^{n-1}}\sum_{\{s_i\}; s_1 = -1}\prod_{i=1}^n s_{i} \sum_{j=1}^n a_{i, j} s_{j},\mm
& =  \frac{1}{2^{n-1}}\sum_{\{s_i\}; s_1 = +1} \prod_{i=1}^n s_{i} \sum_{j=1}^n a_{i, j} s_{j}.
\label{eq:spin_perm_tensor2}
\end{align}
In the second equality we split the summation into the aforementioned two sets of terms, and in the third equality we set these two sets equal due to the noted $\{s_i\} \to -\{s_i\}$ symmetry. Thus Glynn's theorm is proved. 
\end{proof}

The final result in \rfw{spin_perm_tensor2} involves a summation over $2^{n-1}$ terms each of which involves an $n$-factor product over an $n$-term sum. Thus the computation time for the permanent under \rfw{spin_perm_tensor2} is $O(2^{n-1}n^2)$ which is much better than the $O(n!)$ computation time obtained from a brute force calculation according to the definition \rfw{perm_def}. 

The essential advantage of Theorem \ref{thm:fund_thm} is that it is written in terms of the arbitrary probability distribution $p_X$ and thus we can make a convenient choice for this distribution when we seek to prove any theorem. We make such a choice in our proof of a simplified version of the MacMahon Master theorem. 

\begin{corollary}[Simplified MacMahon Master Theorem]  Let $A = (a_{i, j})$ be an $n \times n$ matrix. Let $X$ be the diagonal matrix $X = \diag(x_1, x_2, \ldots, x_n)$.  Then we have
\begin{equation}
\operatorname{perm}(A) = \text{coefficient of $x_1x_2\cdots x_n$ }\;\text{ term in } \; \frac{1}{\det(I - X A)},
\end{equation}
where $I$ is the $n\times n$ identity matrix. 
\label{corr:mahon}
\end{corollary}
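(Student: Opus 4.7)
The plan is to choose a Gaussian density in Theorem \ref{thm:fund_thm} and combine it with the standard Gaussian representation of an inverse determinant so that the coefficient of $x_1\cdots x_n$ emerges directly. The natural candidate is the circular complex Gaussian $p(z)=e^{-|z|^2}/\pi$, since the identity
\begin{equation*}
\frac{1}{\det(I-XA)} \;=\; \int_{\mathbb{C}^n}\frac{d^{2n}\mathbf{z}}{\pi^n}\, e^{-\mathbf{z}^\dagger(I-XA)\mathbf{z}}
\end{equation*}
produces $1/\det(I-XA)$ exactly, whereas a real Gaussian alone would only give $1/\sqrt{\det(I-XA-A^{T}X)}$, the wrong form.

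First I would factor the integrand as $e^{-|\mathbf{z}|^2}\,e^{\mathbf{z}^\dagger X A\mathbf{z}}$ and observe that, because $X$ is diagonal, $\mathbf{z}^\dagger XA\mathbf{z}=\sum_i x_i\,\bar z_i(A\mathbf{z})_i$ is linear in each $x_i$. The coefficient of $x_1\cdots x_n$ in the expansion of the exponential is therefore the multilinear monomial $\prod_i \bar z_i(A\mathbf{z})_i$, so swapping coefficient extraction with the integral gives
\begin{equation*}
[x_1\cdots x_n]\,\frac{1}{\det(I-XA)} \;=\; \int\frac{d^{2n}\mathbf{z}}{\pi^n}\, e^{-|\mathbf{z}|^2}\prod_{i=1}^n \bar z_i(A\mathbf{z})_i.
\end{equation*}

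The closing step is a complex restatement of Theorem \ref{thm:fund_thm}: the real conditions $\langle x\rangle=0$, $\langle x^2\rangle=1$ are replaced by $\langle z\rangle=\langle\bar z\rangle=0$ and $\langle \bar z z\rangle=1$, both of which the circular Gaussian satisfies. Rerunning the permutation-tensor argument with $\bar z_i z_{m_i}$ in place of $x_i x_{m_i}$ shows that $\Delta_{\mathbf{m}}=\langle\prod_i\bar z_i z_{m_i}\rangle$; expanding $\prod_i \bar z_i(A\mathbf{z})_i=\sum_{\mathbf{j}}\prod_i a_{i,j_i}\,\bar z_i z_{j_i}$ and summing against $\Delta_{\mathbf{j}}$ then recovers $\perm(A)$.

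The main obstacle is reconciling this strategy with Theorem \ref{thm:fund_thm} as literally written for a real distribution on $\Omega_X\subset\mathbb{R}$. I expect the paper to handle this through the more general formulation it promises in the remark following the theorem, whose substance is really a pair of moment conditions rather than the probabilistic interpretation; the circular Gaussian is then an admissible choice. A purely real workaround would be to split $z=u+iv$ into two independent Gaussians on $\mathbb{R}$ and verify the same moment structure directly, but that route obscures the mechanism by which the clean $1/\det$ appears.
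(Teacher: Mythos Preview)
Your approach is correct but genuinely different from the paper's. The paper does \emph{not} pass to a complex Gaussian; it stays with the real density $p_X(x)=e^{-x^2/2}/\sqrt{2\pi}$ in Theorem~\ref{thm:fund_thm}, then replaces the product $\prod_i x_i\sum_j a_{i,j}x_j$ by a contour-integral exponential via the identity $\prod_k\alpha_k=\frac{1}{(2\pi i)^n}\oint\prod_k\frac{dz_k}{z_k^2}\,e^{\sum_k\alpha_k z_k}$. The resulting real Gaussian integral over $\mathbf{x}$ produces exactly the ``wrong form'' you anticipated, $1/\det(I-2ZA)^{1/2}$. The paper then observes that the contour integrals extract only the multilinear part in the $z_i$, so one may freely add the quadratic term $ZAZA$ to the argument of the determinant, turning it into $\det((I-ZA)^2)^{1/2}=\det(I-ZA)$.

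What each route buys: your complex-Gaussian argument is cleaner and lands on $1/\det(I-XA)$ in one step, but it requires a mild extension of Theorem~\ref{thm:fund_thm} beyond what either the theorem or the paper's later $(f,g)$ generalization literally states, since the relevant moment identity is $\langle\bar z_i z_{m_i}\rangle$ with two distinct functions rather than $f(x_i)f(x_{m_i})$. The paper's route stays strictly within the real, single-function framework of Theorem~\ref{thm:fund_thm} and exhibits the contour-integral mechanism that it reuses verbatim in the Glynn--MacMahon corollary, at the cost of the extra square-root-removal trick.
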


\begin{proof}
We again start with \rfw{fund_thm}. We choose our probability density to be a Gaussian $p_X(x) = e^{-x^2/2}/\sqrt{2\pi}$. \rfw{fund_thm} then becomes 
\begin{equation}
\perm(A) = \frac{1}{(2\pi)^{n/2}}\int_{\mathbb{R}^n} d^n\tbf{x}\,e^{-\tbf{x}^2/2} \prod_{i=1}^n \, x_{i} \sum_{j=1}^n a_{i, j} x_j,
\label{eq:fund_thm_gauss}
\end{equation}
where $\tbf{x}^2 = x_1^2 + \cdots + x_n^2$.
Using the contour integral identity 
\begin{equation}
\prod_{k=1}^{n}\alpha_k = \frac{1}{(2\pi i)^n} \oint\left[ \prod_{k=1}^{n} \frac{dz_k}{z_k^2}\,\right] e^{\sum_{k=1}^n\alpha_k z_k},
\label{eq:cont_exp_id}
\end{equation}
we write the product in \rfw{fund_thm_gauss} as 
\begin{equation}
 \prod_{i=1}^n \, x_{i} \sum_{j=1}^n a_{i, j} x_j = \frac{1}{(2\pi i)^n} \oint \left[\prod_{i=1}^n\frac{dz_i}{z_i^2} \right]\exp \left(\sum_{i=1}^nz_i x_i \sum_{j=1}^n a_{i, j} x_{j} \right).
\end{equation} 
Inserting this form of the product into \rfw{fund_thm_gauss}, we have 
\begin{align}
\perm&(A) \mm
&  = \frac{1}{(2\pi i)^n} \oint \left[\prod_{i=1}^n\frac{dz_i}{z_i^2} \right]\frac{1}{(2\pi)^{n/2}}\int_{\mathbb{R}^n} d^n\tbf{x}\, \exp\left( - \frac{1}{2}\sum_{i=1}^nx_i^2 + \sum_{i=1}^nz_i x_i \sum_{j=1}^n a_{i, j} x_{j}\right)\mm
& =\frac{1}{(2\pi i)^n} \oint \left[\prod_{i=1}^n\frac{dz_i}{z_i^2} \right] \frac{1}{(2\pi)^{n/2}}\int_{\mathbb{R}^n} d^n\tbf{x}\, \exp\left( - \frac{1}{2}\sum_{i, j=1}^Nx_i\left(\delta_{i, j} - 2 z_ia_{i, j}\right)x_{j} \right)\mm
& = \frac{1}{(2\pi i)^n} \oint \left[\prod_{i=1}^n\frac{dz_i}{z_i^2} \right] \frac{1}{\det{(I - 2Z A)^{1/2}}}
\label{eq:perm_contour}
\end{align}
From Cauchy's integral theorem \cite{ahlfors1953complex}, we know that the contour integral $\frac{1}{2\pi i} \oint \frac{dz}{z^2} p(z)$ picks out the linear-in-$x$ term of $p(x)$. Thus we see that the permanent of $A$ is the coefficient of the $x_1x_2\cdots x_n$ term in ${1}/{\det(I - 2X A)^{1/2}}$. Since it is only the linear-in-$z$ term within the contour integral that contributes to the permanent, we can add a quadratic-in-$Z$ term to the argument of the determinant in \rfw{perm_contour} with no change to the final result: 
\begin{equation}
\perm(A) = \frac{1}{(2\pi i)^n} \oint \left[\prod_{i=1}^n\frac{dz_i}{z_i^2} \right] \frac{1}{\det{(I - 2Z A + ZAZA)^{1/2}}}.
\end{equation}
With the fact that $\det(Q^2)^{1/2} = (\det(Q) \det(Q))^{1/2} = \det(Q)$ for an arbitrary matrix $Q$, we then have 
\begin{equation}
\perm(A) = \frac{1}{(2\pi i)^n} \oint \left[\prod_{i=1}^n\frac{dz_i}{z_i^2} \right] \frac{1}{\det{(I - Z A)}}.
\label{eq:perm_contour2}
\end{equation}

Using the same reasoning as that presented below \rfw{perm_contour}, we see that \rfw{perm_contour2} allows us to conclude that the permanent of $A$ is the coefficient of the $x_1x_2\cdots x_n$ term in ${1}/{\det(I - X A)}$.  

\end{proof}

\begin{figure}[h]
\centering
\includegraphics[width=0.45\linewidth]{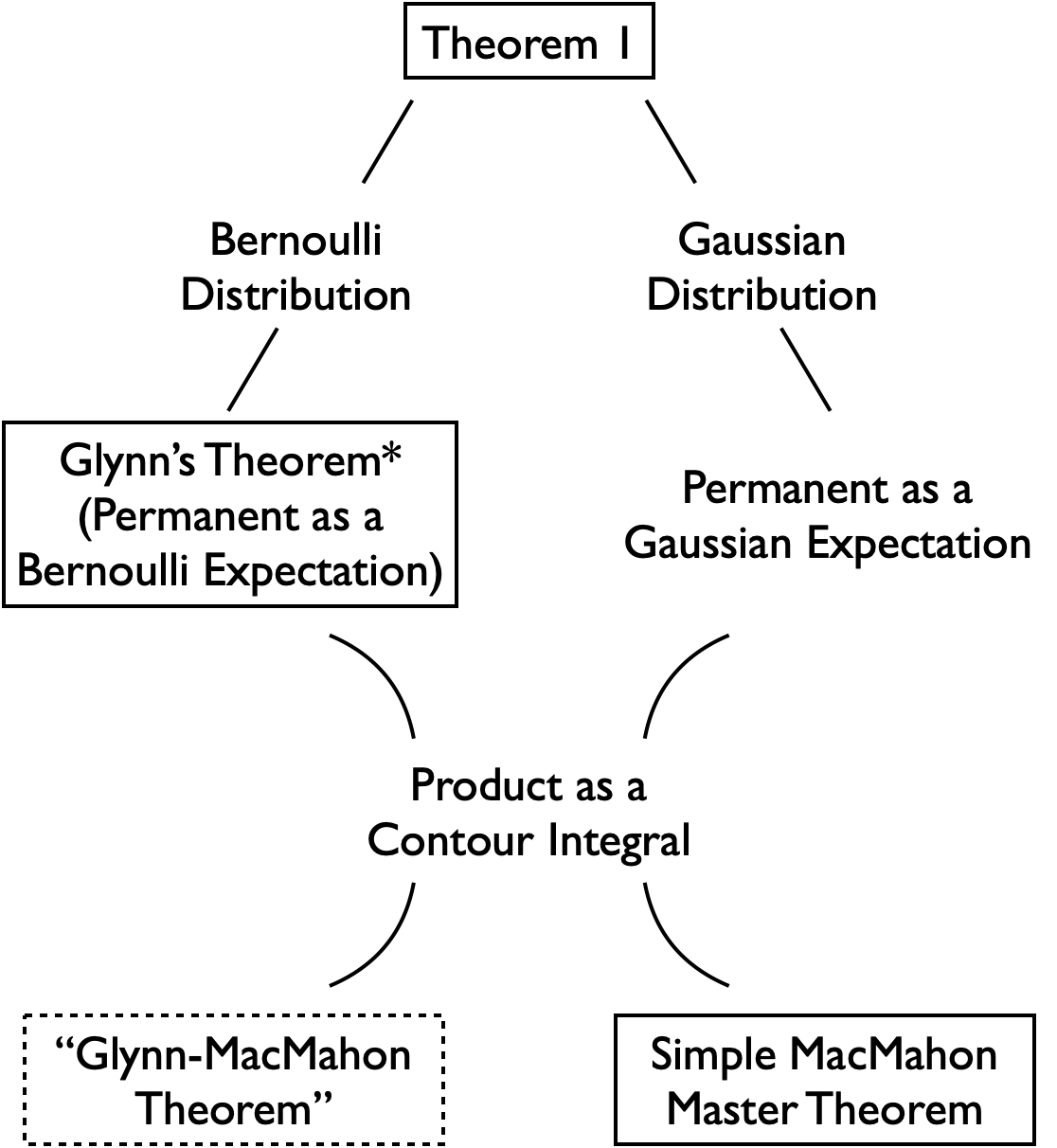}
	\caption{Theorem map: Map showing how Theorem \ref{thm:fund_thm} is related to Glynn's theorem and the simplified MacMahon Master theorem. The ``*" next to Glynn's theorem represents the fact that we are referencing \rfw{glynn_prelim}. The ``Permanent as a Gaussian Expectation" refers to \rfw{fund_thm_gauss}. The ``Product as a Contour Integral" result refers to \rfw{cont_exp_id}. Although the Gaussian integral analog of Glynn's theorem, \rfw{fund_thm_gauss}, is known (e.g., pg 27 of \cite{Percus_1971}), the Bernoulli distribution analog of the simplified MacMahon Master theorem is not part of the literature on permanents. Motivated by this gap we derive the necessary analog, terming it the Glynn-MacMahon theorem given its placement on the map. }
\label{fig:theorem_map}
\end{figure}

With Glynn's theorem and a simplified MacMahon Master theorem proved from Theorem \ref{thm:fund_thm}, we can create a map (\reffig{theorem_map}) relating the former two theorems to their common starting point. From the map, we see that starting from Theorem \ref{thm:fund_thm}, we followed paths that ultimately led us to Glynn's theorem and the MacMahon Master theorem by choosing, respectively, a Bernoulli distribution and a Gaussian distribution as the particular probability distributions from which to sample $x_i$ in \rfw{fund_thm}. However, the map suggests there is a gap in this collection of results. While Glynn's theorem is the Bernoulli distribution analog of \rfw{fund_thm_gauss}, there appears to be no Bernoulli distribution analog to the MacMahon Master theorem. 

In the next theorem, we fill in this gap by proving what we term the Glynn-MacMahon Master theorem given the result's placement in \reffig{theorem_map}. The theorem shows that the permanent of a matrix can be related to the expectation value of a product over hyperbolic trigonometric functions or, equivalently, the partition function of a spin system.

\begin{corollary}[Glynn-MacMahon Theorem] Let $A = (a_{i, j})$ be an $n \times n$ matrix with non-zero determinant, let $X$ be the diagonal matrix $X = \diag(x_1, x_2, \ldots, x_n)$, and let $(\varphi_1, \ldots, \varphi_n)$ to be a vector of $n$ normally distributed random variables with mean $0$ for all components and covariance matrix $XA$. Then we have 
\begin{equation}
\operatorname{perm}(A) = \text{coefficient of $x_1x_2\cdots x_n$} \;\text{ term in }\; \left\langle\prod_{i=1}^n 2 \cosh \varphi_i \right\rangle_{\varphi_i \sim {\cal N}(0 , XA)}.
\end{equation}
\label{corr:glynn_macmahon}
\end{corollary}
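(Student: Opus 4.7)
The plan is to reduce the right-hand side to the Bernoulli expression \rfw{glynn_prelim}, which was already shown to equal $\perm(A)$, by evaluating the Gaussian expectation explicitly and then picking out the required monomial. The first step is to rewrite each hyperbolic cosine as $2\cosh\varphi_i = e^{\varphi_i} + e^{-\varphi_i} = \sum_{s_i\in\{\pm 1\}} e^{s_i\varphi_i}$, so that
\begin{equation}
\prod_{i=1}^n 2\cosh\varphi_i = \sum_{\{s_i\}} \exp\left(\sum_{i=1}^n s_i\varphi_i\right).
\end{equation}
Interchanging sum and expectation and applying the Gaussian moment generating function $\langle e^{v^\top\varphi}\rangle_{\varphi\sim\mathcal{N}(0,\Sigma)} = \exp\!\left(\tfrac{1}{2}v^\top\Sigma v\right)$ with $v=(s_1,\ldots,s_n)$ and $\Sigma = XA$ yields
\begin{equation}
\left\langle\prod_{i=1}^n 2\cosh\varphi_i\right\rangle = \sum_{\{s_i\}}\exp\!\left(\tfrac{1}{2}\sum_{i,j} s_i\,x_i\,a_{i,j}\,s_j\right).
\end{equation}

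Next I would extract the coefficient of $x_1 x_2\cdots x_n$ from this expression. Writing $T(s) = \sum_i x_i Y_i(s)$ with $Y_i(s) = s_i\sum_j a_{i,j}s_j$, and Taylor expanding, only the $n$-th power of the exponent can produce a monomial in which every $x_i$ appears exactly once. In the multinomial expansion of $\left(\tfrac{1}{2}T(s)\right)^n$, such a monomial is produced precisely by the $n!$ orderings for which $(i_1,\ldots,i_n)$ is a permutation of $(1,\ldots,n)$, each contributing $2^{-n}\prod_i Y_i(s)$. The prefactor $1/n!$ from the Taylor series cancels the $n!$ from this count, giving
\begin{equation}
\bigl[\,x_1 x_2 \cdots x_n\,\bigr]\,\exp\!\left(\tfrac{1}{2}T(s)\right) = \frac{1}{2^n}\prod_{i=1}^n s_i\sum_{j=1}^n a_{i,j} s_j.
\end{equation}
Summing over $\{s_i\}\in\{\pm 1\}^n$ reproduces the right-hand side of \rfw{glynn_prelim}, which equals $\perm(A)$, completing the argument.

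The main obstacle is justifying the Gaussian MGF step when the claimed covariance matrix $XA$ is neither symmetric nor generically positive semi-definite. This is handled by noting that only the symmetric part enters the quadratic form, since $s^\top XA\,s = \tfrac{1}{2}s^\top(XA+A^\top X)s$, and that the identity we need is a formal one in the formal parameters $x_i$ from which we extract a polynomial coefficient. The nonzero-determinant hypothesis on $A$ plays an ancillary role in guaranteeing that the Gaussian measure can be defined nondegenerately for sufficiently small $x_i$ so that the expectation is a bona fide analytic function. A secondary bookkeeping obstacle is confirming that no lower-order term in the Taylor expansion can contribute to $x_1 x_2\cdots x_n$, which follows because each factor in $T(s)$ is linear in a single $x_i$, so degree counting forces $k=n$.
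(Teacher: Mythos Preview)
Your argument is correct and shares the same skeleton as the paper's: both pass through the intermediate identity
\[
\left\langle\prod_{i=1}^n 2\cosh\varphi_i\right\rangle_{\varphi\sim\mathcal{N}(0,XA)} \;=\; \sum_{\{s_i\}}\exp\!\Big(\tfrac{1}{2}\sum_{i,j}s_i\,x_i\,a_{i,j}\,s_j\Big)
\]
and then link its multilinear coefficient to \rfw{glynn_prelim}. The difference is the direction of travel and the tools used at each end. The paper starts from \rfw{glynn_prelim}, invokes the contour-integral identity \rfw{cont_exp_id} to package the product as an exponential, and then applies a Hubbard--Stratonovich (inverse-Gaussian) step \rfw{gauss_spin} to introduce the $\varphi$-variables before summing over spins. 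You run the circuit in reverse: you expand the hyperbolic cosines into a spin sum, apply the Gaussian moment generating function directly, and extract the coefficient by elementary Taylor/multinomial bookkeeping rather than contour integration. Your route is slightly more elementary in that it avoids both the contour-integral machinery and the explicit inverse-covariance Gaussian integral; the paper's route, on the other hand, makes the spin-partition-function interpretation (the paragraph following the proof) fall out more naturally as a byproduct of \rfw{perm_spin_prelim}. Your remark that only the symmetric part of $XA$ enters the quadratic form, and that the identity is ultimately formal in the $x_i$, is a useful clarification that the paper leaves implicit.
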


\begin{proof}
From an intermediate result in the proof of Glynn's Theorem, we can write the permanent of $A$ as an expectation value over Bernoulli random variables:
\begin{equation}
\perm(A) = \sum_{\{s_i\}} \prod_{i=1}^n\, \frac{1}{2}\sum_{j=1}^n s_{i}a_{i, j} s_j\,\qquad \text{where} \quad
 \sum_{\{s_i\}} = \prod_{i=1}^n \sum_{s_i\in \{-1,+1\}}.
\label{eq:glynn_prelim2}
\end{equation}
Using \rfw{cont_exp_id}, we then obtain 
\begin{align}
\perm(A) & = \frac{1}{(2\pi i)^{n}} \oint \left[ \prod_{k=1}^n \frac{dz_k}{z_k^2}\right] \sum_{\{s_i\}} \exp\left[ \frac{1}{2} \sum_{i, j=1}^n s_{i}z_ia_{i, j} s_j\right]
\label{eq:perm_spin_prelim}.
\end{align}

Next, we take $z_{i} a_{i, j}$ to be the component in the $i$th row and $j$th column of $ZA$ where $Z = \diag(z_1, \ldots, z_n)$. For notational simplicity, we define $W \equiv ZA$ where $Z = \diag(z_1, \ldots, z_n)$.  Then, using the evaluation of Gaussian integrals for multivariate functions, we have
\begin{align}
\exp\left[ \frac{1}{2} \sum_{i, j=1}^n s_{i}W_{i, j} s_j\right] & = \left(\frac{\det W^{-1}}{(2\pi )^n}\right)^{1/2} \mm
& \times \int d^{n} \boldsymbol{\varphi} \, \exp \left[ - \frac{1}{2} \sum_{i, j=1}^n \varphi_i (W^{-1})_{i,j} \varphi_{j} + \sum_{i=1}^{n} \varphi_i s_{i} \right].
\label{eq:gauss_spin}
\end{align}
From our definition of $W$, we can show that $\det W = z_1 \cdots z_n \det A$, and therefore this result is valid only if $\det A  \neq 0$. 
Performing the summation over spins, and noting that the prefactor in \rfw{gauss_spin} is a normalization constant, we have 
\begin{equation}
\sum_{\{s_i\}} \exp\left[ \frac{1}{2} \sum_{i, j=1}^n s_{i}W_{i, j} s_j\right] = \left\langle\prod_{i=1}^n 2 \cosh \varphi_i\right\rangle_{\varphi_i \sim {\cal N}(0, W)},
\end{equation}
where $\langle {\cal O}(\{\varphi_i\})\rangle_{\varphi_i \sim {\cal N}(0, W)}$ indicates we are taking the expectation value of the function ${\cal O}$ over the normally distributed values $(\varphi_1, \ldots, \varphi_n)$ with mean $0$ and covariance matrix $W$. From \rfw{perm_spin_prelim}, and the fact that the contour integrals pick out the linear parts of their corresponding variable, the theorem is proved. 

\end{proof}

An intermediate result in the above proof shows how the permanent is connected to a canonical system in physics. In statistical physics, if we have a set of $n$ spin variables denoted $\{s_i\}$, where each spin $i$ in this set can have the value $s_i = +1$ or $s_i = -1$, and these spins interact according to the matrix $J_{i,j}$, then the thermal partition function of the system is given by  \cite{Nishimori_2001}
\begin{equation}
Z^{\text{spin}}_{n}(\{\beta J\}) = \sum_{\{s_i\}} \exp\left[- \frac{\beta}{2} \sum_{i, j=1}^n s_{i}J_{i, j} s_j\right].
\end{equation}
where $\beta$ is inverse temperature. By comparison with \rfw{perm_spin_prelim}, we find that the permanent is equal to the $x_1 \cdots x_n$ term of the $n$-spin partition function with interaction matrix $x_{i} a_{i, j}/\beta$, namely:
\begin{equation}
\operatorname{perm}(A) = \text{coefficient of $x_1x_2\cdots x_n$} \;\text{ term in }\; Z^{\text{spin}}_{n}(\{ -XA \}).
\end{equation}

\section{Discussion}
In this work we proved a general theorem for permanents, and used the result to prove Glynn's theorem and a simplified MacMahon Master theorem. Combining aspects of each proof, we proved a new result relating permanents to the expectation value of a product of hyperbolic trigonometric functions. The fact that we can obtain such disparate results from the same theorem suggests that the starting theorem can serve as a foundational definition for permanents. 

In general, although it is convenient to do so, we do not need to interpret the $p_X$ that appears in Theorem \ref{thm:fund_thm} as a probability density. Moreover, it is possible to write the permanent as any multiple integral where the single-variable integrals have two key properties: Their results are zero when their integrands are linear in a function and unity when their integrands are quadratic in that function. 
That is, for functions $f, g: \Omega_X \to \mathbb{R}$ and the set of conditions 
\begin{equation}
\int_{\Omega_X} dx\, g(x) f(x) = 0 \qquad \int_{\Omega_X} dx\, g(x) f(x)^2 = 1,
\label{eq:p_props2}
\end{equation}
we could have just have easily proved  
\begin{equation}
\perm(A) = \int_{\Omega^n_{X}} d^n\tbf{x}\, \prod_{i=1}^n g(x_i) \, f(x_{i}) \sum_{j=1}^n a_{i, j} f(x_j),
\label{eq:fund_thm2}
\end{equation}
using an approach identical to that for Theorem \ref{thm:fund_thm}. This more general case reveals that there is not a coherent probabilistic interpretation for all such expressions. For example, the permanent of $A$, written as a multiple integral over a product of sine functions, can be expressed as
\begin{equation}
\perm(A) = \frac{1}{\pi^n}\int^{2\pi}_{0} d\theta_1 \ldots d\theta_N\, \prod_{i=1}^N\sin(\theta_i) \sum_{j=1}^n a_{i, j} \sin(  \theta_j),
\end{equation}
However, in introducing $1/\pi$ to ensure that the single-variable sine integrals satisfy \rfw{p_props2}, we have rendered them incapable of satisfying the normalization requirement that would allow us to interpret $\theta$ as a random variable with uniform probability density around the unit circle. Thus the probabilistic interpretation of $p_X$ is useful but not necessary in Theorem \ref{thm:fund_thm} and in its generalizations.

One important aspect of the proof of Theorem \ref{thm:fund_thm} is the introduction of the permutation tensor $\Delta_{\tbf{m}}$, \rfw{perm_tensor2}. With \rfw{fund_thm2} we could generalize this tensor and define it in terms of products over functions rather than variables, and similar to how it was used in the main proof, the tensor could be used to define more general sums, such as those that appear in combinatorial optimization problems, where the state space consists of permutations of elements. 

\section{Acknowledgements}

The author declares no conflicts of interest. 


\end{document}